\numberwithin{equation}{section}
\newtheorem{Theorem}{Theorem}[section]
\newtheorem{Definition}[Theorem]{Definition}
\newtheorem{Corollary}[Theorem]{Corollary}
\newtheorem{Lemma}[Theorem]{Lemma}
\newtheorem{Proposition}[Theorem]{Proposition}
\newtheorem{Remark}{Remark}[section]
\begin{document}

\title[global non-existence of semirelativistic equations]%
{A note for the global non-existence of
semirelativistic equations with non-gauge invariant power type nonlinearity}

\author[K. Fujiwara]{Kazumasa Fujiwara}

\address{%
Centro di Ricerca Matematica Ennio De Giorgi\\
Scuola Normale Superiore\\
Pisa, Italy.
}

\email{kazumasa.fujiwara@sns.it}

\begin{abstract}
The non-existence of global solutions
for semirelativistic equations with non-gauge invariant power type nonlinearity
is revisited by a relatively direct way
with a pointwise estimate of fractional derivative of some test functions.
\end{abstract}

\maketitle

\section{Introduction}
We consider the Cauchy problem for the following
semirelativistic equations with non-gauge invariant power type nonlinearity:
	\begin{align}
	\begin{cases}
	i \partial_t u + (-\Delta)^{1/2} u = \lambda |u|^p,
	&\quad t \in \lbrack 0, T), \quad x \in \mathbb R^n,\\
	u(0) = u_0,
	&\quad x \in \mathbb R^n,
	\end{cases}
	\label{eq:1.1}
	\end{align}
with $\lambda \in \mathbb C \backslash \{0\}$,
where $\partial_t = \partial/\partial t$
and $\Delta$ is the Laplacian in $\mathbb R^n$.
Here $(- \Delta)^{1/2}$ is realized as a Fourier multiplier with symbol
$|\xi|$: $(-\Delta)^{1/2} = \mathfrak F^{-1} |\xi| \mathfrak F$,
where $\mathfrak F$ is the Fourier transform defined by
	\[
	(\mathfrak F u )(\xi) = \hat u(\xi)
	= (2 \pi)^{-n/2} \int_{\mathbb R^n} u(x) e^{-i x \cdot \xi} dx.
	\]

We remark that the Cauchy problem such as \eqref{eq:1.1} arises
in various physical settings and accordingly,
semirelativistic equations are also called
half-wave equations, fractional Schr\"odinger equations, and so on,
see \cite{bib:2,bib:10,bib:11} and reference therein.

The local existence for \eqref{eq:1.1} in the $H^s(\mathbb R^n)$ framework
is easily seen if $s > n / 2$,
where $H^s(\mathbb R^n)$ is the usual Sobolev space
defined by $(1-\Delta)^{-s} L^2(\mathbb R^n)$.
Here the local existence in the $H^s(\mathbb R^n)$ framework means
that for any $H^s(\mathbb R^n)$ data,
there is a positive time $T$ such that
there is a solution for the corresponding integral equation,
	\begin{align}
	u(t)
	= e^{it(-\Delta)}u_0 - i \lambda \int_0^t e^{i(t-t')(-\Delta)^{1/2}} |u(t')|^p dt',
	\label{eq:1.2}
	\end{align}
in $C([0,T);H^s(\mathbb R^n))$.
We remark that for $s > n/2$, local solution for \eqref{eq:1.2}
may be constructed by a standard contraction argument
with the Sobolev embedding $H^s(\mathbb R^n) \hookrightarrow L^\infty(\mathbb R^n)$
which holds if and only if $s > n/2$.
We also remark that in the one dimensional case,
$s > 1/2$ is also the necessary condition for the local existence
in the $H^s(\mathbb R)$ framework
because the non-existence of local weak solutions for \eqref{eq:1.1}
with some $H^{1/2}(\mathbb R)$ data is shown in \cite{bib:6}.
In a general setting, the necessary condition is still open
and partial results are discussed in \cite{bib:10}.
On the other hand,
\eqref{eq:1.1} is scaling invariant.
Namely, when $u$ is a solution for \eqref{eq:1.1} with initial data $u_0$,
then for any $\rho > 0$, the pair,
	\begin{align}
	u_\rho(t,x) = \rho^{1/(p-1)} u(\rho t, \rho x),
	\quad u_{0,\rho} = \rho^{1/(p-1)} u_0(\rho x)
	\label{eq:1.3}
	\end{align}
also satisfies \eqref{eq:1.1}.
Then the case where $(s,q)$ satisfies that for $u_0 \in H_q^s \backslash \{ 0 \}$,
	\[
	\| (-\Delta)^{s/2} u_{0,\rho} \|_{L^2(\mathbb R^n)}
	\to \infty \quad \mathrm{as} \quad \rho \to \infty
	\ \Longleftrightarrow \ 
	s - \frac{n}{q} + \frac{1}{p-1}
	> 0
	\]
is called $H^s_q(\mathbb R^n)$ scaling subcritical case,
where $H_q^s(\mathbb R^n) = (1-\Delta)^{-s/2} L^q(\mathbb R^n)$.
Moreover, if $ s = \frac{n}{q} + \frac{1}{p-1}$,
we call the case as $H^s_q(\mathbb R^n)$ scaling critical case.
In the $H^s(\mathbb R^n)$ scaling subcritical case, in general,
the local existence in $H^s(\mathbb R^n)$ framework
is expected but this is not our case.
Similarly,
in the $H_q^s(\mathbb R^n)$ scaling subcritical and critical cases,
the non-existence of global solution for some $H_q^s(\mathbb R^n)$ data is expected.
In the present paper,
we are interested in a priori global non-existence in some scaling subcritical case.

In the present paper, we revisit the global non-existence of \eqref{eq:1.1}.
In order to go back to prior works,
we define weak solutions for \eqref{eq:1.1} and its lifespan.
\begin{Definition}
Let $u_0 \in L^2(\mathbb R^n)$.
We say that $u$ is a weak solution to \eqref{eq:1.1} on $[0,T)$,
if $u$ belongs to
$L_\mathrm{loc}^1(0, T ; L^2(\mathbb R^n))
\cap L_\mathrm{loc}^1(0, T ; L^p(\mathbb R^n))$
and the following identity
	\begin{align}
	\int_0^\infty \big( u(t)
	\big| i \partial_t \psi (t) + (- \Delta)^{1/2} \psi (t) \big) dt
	= i (u_0 | \psi (0)) + \lambda \int_0^\infty \big( |u(t)|^p \big|\psi  (t) \big) dt
	\label{eq:1.4}
	\end{align}
holds for any
$\psi \in C([0,\infty); H^1(\mathbb R^n)) \cap C^1([0,\infty); L^2(\mathbb R^n))$
satisfying
	\[
	\mathrm{supp}\thinspace \psi \subset [0,T] \times \mathbb R,
	\]
where $(\cdot \mid \cdot)$ is the usual $L^2(\mathbb R^n)$ inner product defined by
	\[
	(f \mid g) = \int_{\mathbb R^n} \overline{f(x)} g(x) dx.
	\]
Moreover we define $T_w$ as
	\[
	T_w
	= \inf\{T > 0 \ ; \ \mbox{There is no weak solutions for \eqref{eq:1.1} on $[0,T)$.}\}.
	\]
\end{Definition}
\noindent

The author and Ozawa \cite{bib:7} showed the global non-existence
in $L^1(\mathbb R)$ scaling critical and subcritical cases.
\begin{Proposition}
\label{Proposition:1.2}
If $n=1$, $1 < p \leq 2$, and $u_0 \in (L^1 \cap L^2) (\mathbb R)$ satisfying that
	\begin{align}
	\mathrm{Re} (\overline \lambda u_0) =0,
	\quad
	- \mathrm{Im} \bigg( \int_{\mathbb R} \overline \lambda u_0(x) dx \bigg) > 0,
	\label{eq:1.5}
	\end{align}
then there is no global weak solution,
namely, if $T$ is big enough,
there is no weak solution on $[0,T)$.
\end{Proposition}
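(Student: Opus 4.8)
The plan is to argue by contradiction using the rescaled test-function method, adapted to the nonlocal operator $(-\Delta)^{1/2}$. Suppose a global weak solution $u$ exists, so that \eqref{eq:1.4} holds for every admissible $\psi$ and every $T$. I would fix a nonnegative profile $\Phi \geq 0$, smooth and equal to $1$ near the origin, and form the space-time rescalings $\varphi_R(t,x) = \Phi(t/R, x/R)$; since the dispersion is of order one, time and space are scaled by the same factor $R$. Rather than insert $\varphi_R$ directly, I would test \eqref{eq:1.4} against $\psi_R = \bar\lambda\,\varphi_R$, so that the nonlinear term on the right-hand side becomes $|\lambda|^2\int_0^\infty\!\int_{\mathbb R}|u|^p\varphi_R\,dx\,dt =: |\lambda|^2 I_R \geq 0$, a genuinely nonnegative real quantity.

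Next I would take real parts. By \eqref{eq:1.5} the condition $\mathrm{Re}(\bar\lambda u_0)=0$ makes the initial-data term real, reducing it to a positive multiple of $-\mathrm{Im}\big(\int_{\mathbb R}\bar\lambda u_0\,\varphi_R(0,x)\,dx\big)$; since $\Phi(0,\cdot)\to 1$ pointwise and $-\mathrm{Im}\big(\int_{\mathbb R}\bar\lambda u_0\,dx\big)>0$, dominated convergence shows this term is bounded below by a fixed constant $B_0>0$ once $R$ is large. The remaining linear terms pair $u$ with $g_R := i\partial_t\varphi_R + (-\Delta)^{1/2}\varphi_R$, which I would control by the weighted H\"older inequality
\begin{align*}
\Big|\int_0^\infty\!\int_{\mathbb R}\overline{u}\,g_R\,dx\,dt\Big|
\leq I_R^{1/p}\Big(\int_0^\infty\!\int_{\mathbb R}\varphi_R^{-p'/p}\,|g_R|^{p'}\,dx\,dt\Big)^{1/p'}
=: I_R^{1/p}\,J_R^{1/p'},
\end{align*}
with $p'=p/(p-1)$. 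Combining these and absorbing $I_R$ by Young's inequality yields both a uniform bound $I_R \lesssim J_R$ and the key estimate $B_0 \lesssim J_R$.

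The crux, and the step I expect to be the main obstacle, is the control of $J_R$. Because $(-\Delta)^{1/2}$ is nonlocal, $(-\Delta)^{1/2}\varphi_R$ is not supported where $\varphi_R$ is, so the weight $\varphi_R^{-p'/p}$ threatens to blow up exactly where $(-\Delta)^{1/2}\varphi_R \neq 0$. Making $J_R$ finite therefore requires a genuine pointwise estimate for the fractional derivative of the profile, of the form $|(-\Delta)^{1/2}\varphi_R(t,x)| \lesssim R^{-1}\,\varphi_R(t,x)^{1/p}$ on the relevant region, so that $\varphi_R^{-p'/p}|(-\Delta)^{1/2}\varphi_R|^{p'}$ stays integrable; this is precisely the pointwise bound advertised in the abstract, and it forces $\Phi$ to be a suitable power of a smooth, \emph{non}-compactly-supported bump rather than a sharp cutoff. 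Granting such an estimate, scaling gives $J_R \sim R^{(n+1)-p'} = R^{2-p'}$ for $n=1$.

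Finally I would read off the contradiction. For $1<p<2$ one has $p'>2$, hence $J_R \sim R^{2-p'}\to 0$ as $R\to\infty$, which is incompatible with $B_0 \lesssim J_R$ and $B_0>0$. The critical exponent $p=2$ (where $p'=2$ and $J_R = O(1)$) is the delicate case: here I would first use the uniform bound $I_R \lesssim J_R = O(1)$ to conclude $\int_0^\infty\!\int_{\mathbb R}|u|^p<\infty$, then refine the H\"older step by restricting integration to the annular region $\{t+|x|\sim R\}$ on which $g_R$ is supported. Since the total nonlinear mass is finite, $\int\!\int_{\mathrm{supp}\,g_R}|u|^p\to 0$ as this region escapes to infinity, forcing the right-hand side to vanish while the boundary contribution $B_0$ persists, which is again a contradiction and completes the argument.
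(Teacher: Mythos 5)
Your route is genuinely different from the paper's. The paper obtains Proposition \ref{Proposition:1.2} (only in the subcritical range $1<p<2$) as Corollary \ref{Corollary:1.7} of Proposition \ref{Proposition:1.6}, by an ODE argument: one differentiates $M_R(t)=-\mathrm{Im}\big(\alpha\int u(t,x)\langle x/R\rangle^{-n-1}dx\big)$ in time, controls the contribution of $(-\Delta)^{1/2}$ pointwise by Lemma \ref{Lemma:1.5}, and closes a Riccati-type inequality $M_R'\gtrsim R^{-n(p-1)}(M_R-C R^{n-1/(p-1)})^p$; the original proof in \cite{bib:7} (which also covers $p=2$) instead converts \eqref{eq:1.1} into the local wave equation \eqref{eq:1.10} and runs the classical test-function method there. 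You keep the test-function method but apply it directly to the nonlocal equation with an algebraically decaying profile. For $1<p<2$ this can be made to work, and the estimate you ``grant'' is exactly Lemma \ref{Lemma:1.5}: with $\Phi=\langle\cdot\rangle^{-q}$ one has $|(-\Delta)^{1/2}\langle\cdot\rangle^{-q}|\lesssim\langle x\rangle^{-\min(q,n)-1}$, and the window $n<q<n+p$ (e.g.\ $q=n+1$, the paper's choice) is what makes $\int\varphi_R^{-p'/p}|(-\Delta)^{1/2}\varphi_R|^{p'}dx$ finite. Note the upper restriction on $q$ is forced because the decay of the fractional derivative saturates at $\langle x\rangle^{-n-1}$ no matter how fast $\Phi$ decays, so your inequality $|(-\Delta)^{1/2}\varphi_R|\lesssim R^{-1}\varphi_R^{1/p}$ holds only for profiles that do not decay too fast; this is the content you would have to supply, and it is nontrivial. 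What the paper's ODE route buys is an explicit lifespan bound and some information on the blowing-up quantity; what yours buys is that it never leaves the weak formulation.

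The genuine gap is the critical case $p=2$, which the statement includes. Your refinement rests on $g_R$ being ``supported in the annular region $\{t+|x|\sim R\}$'' so that $\int_0^\infty\!\int_{\mathrm{supp}\thinspace g_R}|u|^p\to 0$ as $R\to\infty$. For the $i\partial_t\varphi_R$ piece this is fine, but for $(-\Delta)^{1/2}\varphi_R$ it is precisely the property that nonlocality destroys, as you yourself observed two paragraphs earlier: $\mathrm{supp}\thinspace(-\Delta)^{1/2}\varphi_R$ is all of $\mathbb R$, and in particular contains a fixed neighbourhood of the origin where $\int|u|^2$ has no reason to be small, so the right-hand side of your refined H\"older step does not vanish. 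A repair is possible, but by a different mechanism: split space into $|x|\le R^{\theta}$ and $|x|>R^{\theta}$ with $0<\theta<1$, use the pointwise smallness $|(-\Delta)^{1/2}\varphi_R|\lesssim R^{-1}$ on the inner region (so its weighted $L^{p'}$ contribution is $o(1)$), and use the tail-smallness of the finite quantity $\int_0^\infty\!\int|u|^2$ on the outer region --- or fall back on the reduction to \eqref{eq:1.10} as in \cite{bib:7}. As written, the case $p=2$ does not close; the subcritical part of your sketch is sound modulo proving the pointwise lemma.
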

\noindent
Here we remark that the case when $p=2$ is $L^1(\mathbb R)$ scaling critical.

Later, Inui \cite{bib:10} obtained the following global non-existence
in $H^s(\mathbb R^n)$ scaling critical and subcritical cases
for large data with $0 \leq s < n/2$
and in $L^2(\mathbb R^n)$ scaling subcritical case for small data:
\begin{Proposition}
\label{Proposition:1.3}
Let $ s\geq 0$.
We assume that $1 < p \leq 1 + 2/(n - 2s)$
and the initial value $u_0 (x) = \mu f (x)$,
where $\mu > 0$ and $f \in L^2(\mathbb R^n)$ satisfies that
	\begin{align}
	\mathrm{Re}(\overline \lambda f ) = 0,
	\quad
	-\mathrm{Im}(\overline \lambda f ) \geq
	\begin{cases}
	|x|^{-k},
	&\quad \mathrm{if} \quad|x| \leq 1,\\
	0,
	&\quad \mathrm{if} \quad|x| > 1,
	\end{cases}
	\label{eq:1.6}
	\end{align}
with $k < n/2 - s (\leq 1/(p - 1))$.
Then there exists $\mu_0$ such that if $\mu > \mu_0$,
then there is no global weak solution.
Moreover, for any $\mu \in \lbrack \mu_0 , \infty)$,
there exists a positive constant $C > 0$ such that
	\[
	T_w \leq C \mu^{- \frac{1}{\frac{1}{p-1}-k}}.
	\]
\end{Proposition}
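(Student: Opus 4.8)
The plan is to use the classical test-function method (Baras--Pierre / Zhang type) adapted to the nonlocal operator $(-\Delta)^{1/2}$. I would test the weak formulation \eqref{eq:1.4} against $\psi(t,x) = \lambda \varphi_R(t,x)$ where $\varphi_R$ is a rescaled cutoff; the key point is that since the nonlinearity $|u|^p$ is non-gauge-invariant, pairing with a well-chosen $\psi$ produces a real, signed quantity rather than an oscillatory one. Concretely, I would choose $\psi(t,x) = \overline\lambda \, \eta(t/R)^\ell \, \phi(x/R^{?})$ (with a large power $\ell$ on the time cutoff so that $\partial_t\psi$ vanishes to high order), where $\eta$ and $\phi$ are smooth, compactly supported, nonnegative bump functions. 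With the sign conditions \eqref{eq:1.6} on $u_0$, the boundary term $i(u_0\mid\psi(0))$ is real and strictly positive, while $\mathrm{Re}\!\int (u \mid (-\Delta)^{1/2}\psi)$ and $\mathrm{Re}\!\int(u\mid i\partial_t\psi)$ must be controlled and absorbed.

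The central estimate, and the innovation advertised in the abstract, is a \emph{pointwise} bound on the fractional derivative of the test function: I would establish that $|(-\Delta)^{1/2}\phi(x)| \le C\,\omega(x)$ for some explicit weight $\omega$ dominated by $\phi$ itself (up to a power), so that after rescaling $(-\Delta)^{1/2}$ contributes a factor $R^{-1}$ and a spatial profile controllable by $\phi^{1/p}$. This pointwise fractional-derivative estimate replaces the more delicate $L^{p'}$ mapping estimates used in earlier works and is what makes the argument ``relatively direct.'' I would then insert the $(-\Delta)^{1/2}$ term into the weak identity, take real parts, and on the right-hand side define the functional $I_R := \mathrm{Re}\,\lambda\int\int |u|^p \varphi_R \,dx\,dt = -\mathrm{Im}\int\int|u|^p\,\overline\lambda\,\varphi_R$, which is nonnegative.

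Next I would apply Hölder's inequality in the weighted form
\[
\bigg|\int_0^\infty\!\!\int (u\mid i\partial_t\psi + (-\Delta)^{1/2}\psi)\,dx\,dt\bigg|
\le C\,I_R^{1/p}\,\bigg(\int_0^\infty\!\!\int \frac{|i\partial_t\psi + (-\Delta)^{1/2}\psi|^{p'}}{\varphi_R^{p'/p}}\,dx\,dt\bigg)^{1/p'},
\]
using the pointwise estimate to bound the weighted integral of the symbol-times-derivative terms by $C\,R^{\theta}$ for an explicit exponent $\theta$ determined by scaling (here $p' = p/(p-1)$). Combined with the positive boundary term, this yields an inequality of the form $c\,\mu + I_R \le C\,I_R^{1/p}R^{\theta}$, and a Young-type absorption gives $c\,\mu \le C' R^{\theta p'}$. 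The scaling-subcritical hypothesis $p \le 1 + 2/(n-2s)$ together with the decay rate $k < n/2 - s$ is exactly what forces $\theta < 0$ (or the limiting exponent), so letting $R\to\infty$ contradicts the positivity of the $\mu$ term once $\mu$ exceeds some $\mu_0$. The lifespan bound follows by keeping $R$ finite and optimizing: balancing $c\mu$ against $C'R^{\theta p'}$ yields $T_w \lesssim R \lesssim \mu^{-1/(\frac{1}{p-1}-k)}$.

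The main obstacle I expect is the pointwise fractional-derivative estimate itself: because $(-\Delta)^{1/2}$ is nonlocal, $(-\Delta)^{1/2}\phi$ does not inherit the compact support of $\phi$, so I must control its tail decay carefully and verify that the weighted integral $\int |(-\Delta)^{1/2}\phi|^{p'}\phi^{-p'/p}\,dx$ converges. Ensuring convergence near the boundary of $\mathrm{supp}\,\phi$ (where $\phi$ vanishes and the weight $\phi^{-p'/p}$ blows up) requires choosing $\phi$ to vanish to sufficiently high order and confirming that the fractional derivative vanishes compatibly there; this compatibility between the singular weight and the nonlocal operator is the technically delicate heart of the argument.
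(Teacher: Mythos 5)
Your central H\"older step is exactly where the argument breaks down, and it is the obstruction this paper is built around. If $\phi$ is a compactly supported spatial bump, then $(-\Delta)^{1/2}\phi$ is \emph{not} compactly supported: from the singular-integral representation \eqref{eq:1.12} it is strictly nonzero (and only polynomially decaying, like $|x|^{-n-1}$) on the whole exterior of $\mathrm{supp}\,\phi$. Consequently the weighted integral
\[
\int_0^\infty\!\!\int_{\mathbb R^n}
\frac{|i\partial_t\psi+(-\Delta)^{1/2}\psi|^{p'}}{\varphi_R^{p'/p}}\,dx\,dt
\]
is $+\infty$: the integrand is infinite on the entire complement of $\mathrm{supp}\,\varphi_R$, not merely near its boundary, and no choice of vanishing order of $\phi$ at the edge of its support can repair this. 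This is precisely the failure of the pointwise bound \eqref{eq:1.9} that the introduction identifies as the reason the standard Baras--Pierre/Zhang method does not apply to \eqref{eq:1.1}. The two known ways around it are (i) the route of \cite{bib:7,bib:10}: apply $-\mathrm{Im}(\overline\lambda(i\partial_t-(-\Delta)^{1/2}))$ to convert \eqref{eq:1.1} into the local wave equation \eqref{eq:1.10} and only then run the test-function machinery; or (ii) the route of this paper: abandon compact supports, take the nowhere-vanishing weight $\langle x/R\rangle^{-n-1}$, control its fractional derivative pointwise by Lemma \ref{Lemma:1.5}, and run an ODE comparison on $M_R(t)=-\mathrm{Im}\big(\alpha\int u(t,x)\langle x/R\rangle^{-n-1}dx\big)$, which is shown to satisfy $M_R'\gtrsim R^{-n(p-1)}(M_R-C R^{n-1/(p-1)})^p$ and hence to blow up in finite time. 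Your proposal adopts neither device, so the ``technically delicate heart'' you flag at the end is in fact fatal as stated.

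A second, independent problem is the direction of the scaling limit. For Proposition \ref{Proposition:1.3} the blowup is driven by the singularity $|x|^{-k}$ of $f$ at the origin together with large $\mu$, so the relevant scale is $R\to0$: in the paper's Corollary \ref{Corollary:1.8} one takes $R_1\sim\mu^{-1/(1/(p-1)-k)}\to0$ as $\mu\to\infty$. Since $p\le1+2/(n-2s)$ with $s>0$ allows $p$ above the Fujita-type exponent, sending $R\to\infty$ does not produce a sign contradiction here; the competition is between a datum term of size $\sim\mu R^{n-k}$ and an error term of size $\sim R^{n-1/(p-1)}$, won for small $R$ by taking $\mu$ large. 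Your final optimization would need to be redone with this orientation to recover $T_w\lesssim\mu^{-1/(\frac{1}{p-1}-k)}$.
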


\begin{Proposition}
\label{Proposition:1.4}
We assume that $1 < p < 1 + 2/n$,
and the initial value $u_0 (x) = \mu f (x)$,
where $\mu > 0$ and $f \in L^2(\mathbb R^n)$ satisfies that
	\begin{align}
	\mathrm{Re}(\overline \lambda f ) = 0,
	\quad
	-\mathrm{Im}(\overline \lambda f ) \geq
	\begin{cases}
	0,
	&\quad \mathrm{if} \quad|x| \leq 1,\\
	|x|^{-k},
	&\quad \mathrm{if} \quad|x| > 1,
	\end{cases}
	\label{eq:1.7}
	\end{align}
where $n/2 < k < 1/(p - 1)$.
Then there is no global weak solution.
Moreover, there exist $\epsilon > 0$ and a positive constant $C > 0$ such that
	\begin{align}
	T_w \leq
	\begin{cases}
	C \mu^{-\frac{1}{\frac{1}{p-1}-k}},
	&\quad \mathrm{if} \quad 0< \mu < \epsilon ,\\
	2,
	&\quad \mathrm{if} \quad \mu > \epsilon.
	\end{cases}
	\end{align}
\end{Proposition}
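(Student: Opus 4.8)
The plan is to run the rescaled test function method, using the non-gauge invariant structure of the nonlinearity to extract a favourably signed contribution from the data. Fix a cut-off $\eta\in C^\infty([0,\infty))$ with $\eta\equiv1$ near $0$ and $\eta\equiv0$ on $[1,\infty)$, and a smooth, strictly positive radial profile $\Phi$ with $\Phi\equiv1$ for $|x|\le1$ and a suitable polynomial decay as $|x|\to\infty$. For parameters $T>0$ and $R\ge1$, and with $p'=p/(p-1)$, I would test the weak identity \eqref{eq:1.4} against
\[
\psi_{T,R}(t,x)=\eta(t/T)^{m}\,\Phi(x/R),
\]
with $m\ge p'$ so that $\psi_{T,R}^{-1/(p-1)}|\partial_t\psi_{T,R}|^{p'}$ is integrable in time, and with $\Phi$ chosen so that $\int_{|x|>1}|x|^{-k}\Phi(x/R)\,dx\gtrsim R^{\,n-k}$. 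Keeping $\Phi$ strictly positive (rather than compactly supported) is essential, since the singular weight $\psi_{T,R}^{-1/(p-1)}$ must remain finite wherever the nonlocal operator $(-\Delta)^{1/2}$ produces tails.

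First I would substitute $\psi_{T,R}$ into \eqref{eq:1.4}, multiply by $\overline\lambda$, and take the real part. The nonlinear term then becomes $|\lambda|^2\int_0^\infty(|u|^p\mid\psi_{T,R})\,dt\ge0$, while the hypotheses \eqref{eq:1.7}, namely $\mathrm{Re}(\overline\lambda f)=0$ together with $-\mathrm{Im}(\overline\lambda f)\ge|x|^{-k}$ on $\{|x|>1\}$, are tailored precisely so that the data term $\mathrm{Re}\big(i\overline\lambda(u_0\mid\psi_{T,R}(0))\big)$ reduces to a nonnegative quantity bounded below by $c\,\mu\int_{|x|>1}|x|^{-k}\Phi(x/R)\,dx\gtrsim\mu R^{\,n-k}$. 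Writing $M=\int_0^\infty(|u|^p\mid\psi_{T,R})\,dt$, this produces the master inequality
\[
|\lambda|^2 M+c\,\mu R^{\,n-k}\le\Big|\int_0^\infty\big(u\,\big|\,i\partial_t\psi_{T,R}+(-\Delta)^{1/2}\psi_{T,R}\big)\,dt\Big|.
\]

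Next I would estimate the right-hand side by Hölder's inequality in the measure $\psi_{T,R}\,dx\,dt$, separating off the nonlinear factor,
\[
\Big|\int_0^\infty\big(u\mid i\partial_t\psi_{T,R}+(-\Delta)^{1/2}\psi_{T,R}\big)\,dt\Big|\le M^{1/p}K^{1/p'},\quad K=\int_0^\infty\!\!\int_{\mathbb R^n}\big(|\partial_t\psi_{T,R}|+|(-\Delta)^{1/2}\psi_{T,R}|\big)^{p'}\psi_{T,R}^{-1/(p-1)}\,dx\,dt,
\]
and absorb $M^{1/p}K^{1/p'}\le\tfrac12|\lambda|^2 M+CK$ by Young's inequality, which leaves $\mu R^{\,n-k}\lesssim K$. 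Balancing the two scales by taking $T\sim R$ (legitimate because $\partial_t$ and $(-\Delta)^{1/2}$ share the same scaling dimension for the half-wave operator) and rescaling $t\mapsto t/T$, $x\mapsto x/R$, the exponents collapse through $p'-1/(p-1)=1$ to give $K\sim R^{\,n-1/(p-1)}$. Hence $\mu R^{\,n-k}\lesssim R^{\,n-1/(p-1)}$, that is $\mu\lesssim R^{-(1/(p-1)-k)}$; since $1/(p-1)-k>0$ the right-hand side vanishes as $R\to\infty$, which is impossible, so no global weak solution exists. Tracking the scale, a weak solution on $[0,T)$ forces $\mu\lesssim T^{-(1/(p-1)-k)}$, i.e. $T\lesssim\mu^{-1/(1/(p-1)-k)}$; because the construction only requires $R\ge1$, for $\mu$ above a threshold $\epsilon$ the obstruction already appears at $R\sim1$, giving $T_w\le2$, whereas for $0<\mu<\epsilon$ optimisation at $R\sim T$ yields the stated bound.

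The step I expect to be the main obstacle is the control of $K$, and specifically the contribution of $(-\Delta)^{1/2}\psi_{T,R}$. Since $(-\Delta)^{1/2}$ is nonlocal, $(-\Delta)^{1/2}\Phi(\cdot/R)$ does not vanish off the bulk of $\Phi$ but only decays polynomially, so without sharp control the weight $\psi_{T,R}^{-1/(p-1)}$ would make $K$ diverge. The crux is therefore a pointwise estimate of the fractional derivative of the chosen profile, of the form
\[
\big|(-\Delta)^{1/2}\Phi(x/R)\big|\lesssim R^{-1}\,\Phi(x/R),
\]
which is exactly strong enough to give $\psi_{T,R}^{-1/(p-1)}|(-\Delta)^{1/2}\psi_{T,R}|^{p'}\lesssim R^{-p'}\psi_{T,R}$ and hence the clean scaling $K\sim R^{\,n-1/(p-1)}$. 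Establishing such a pointwise bound for a convenient profile $\Phi$ is the technical heart of the argument; the conditions $1<p<1+2/n$ and $n/2<k<1/(p-1)$ enter precisely to keep the resulting spatial integrals (both the data integral and $K$) convergent and correctly ordered. Once this fractional-derivative estimate is in hand, the Hölder–Young absorption and the scaling optimisation above are routine.
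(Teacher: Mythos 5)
Your outer framework differs from the paper's: you run a space--time test function argument in the style of Baras--Pierre and Zhang, whereas the paper proves Proposition \ref{Proposition:1.4} by passing through Proposition \ref{Proposition:1.6} and Corollary \ref{Corollary:1.9}, i.e.\ an ODE argument for the weighted quantity $M_R(t)=-\mathrm{Im}\big(\alpha\int u(t,x)\langle x/R\rangle^{-n-1}dx\big)$, which is shown to dominate a supersolution of $f'=c\,f^p$ and hence to blow up by time $T_{n,p,\lambda,\alpha,R}$; that route additionally yields quantitative information on the solution (divergence of a weighted integral) and the sharper lifespan exponent $\min(n,k)$ of \eqref{eq:1.19}. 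However, both routes stand or fall on the same single ingredient, and that is exactly where your proposal has a genuine gap: the pointwise bound $|(-\Delta)^{1/2}\Phi(\cdot/R)(x)|\lesssim R^{-1}\Phi(x/R)$ is asserted as ``the technical heart'' but neither proved nor even instantiated with a concrete profile. This is precisely Lemma \ref{Lemma:1.5} of the paper (proved in Section 2 via the singular-integral representation \eqref{eq:1.12}), and the choice of $\Phi$ is more delicate than ``a suitable polynomial decay'' suggests: you need $\Phi\in L^1(\mathbb R^n)$ (decay faster than $|x|^{-n}$) so that $K\sim TR^{n}\cdot(\dots)$, yet by the sharpness discussed in Remarks \ref{Remark:2.1} and \ref{Remark:2.2} the fractional derivative of any profile decaying faster than $|x|^{-n-1}$ (or a Gaussian) still only decays like $\langle x\rangle^{-n-1}$, so the comparison $|(-\Delta)^{1/2}\Phi|\lesssim\Phi$ fails for such profiles. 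The admissible window is $\Phi=\langle\cdot\rangle^{-q}$ with $n<q\le n+1$, and $q=n+1$ is the paper's choice. Without nailing this down, your key claim $K\sim R^{\,n-1/(p-1)}$ is unsupported, since the paper's own discussion around \eqref{eq:1.9} explains that naive pointwise control of $(-\Delta)^{1/2}$ of test functions is exactly what fails in general.

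Two smaller points. First, your sign bookkeeping is off: with the conjugate-linear inner product $(f\mid g)=\int\overline{f}g$, multiplying \eqref{eq:1.4} by $\overline\lambda$ and taking real parts produces the data term $\mathrm{Re}\big(i\,\overline{\lambda u_0}\big)\psi(0)=\mathrm{Im}(\lambda u_0)\psi(0)$, whose sign under hypothesis \eqref{eq:1.7} depends on $\mathrm{Re}(\lambda^2)$; the correct combination (as in the computation of $\tfrac{d}{dt}M_R$ in Section 3 with $\alpha=\overline\lambda$) isolates $-\mathrm{Im}(\overline\lambda u_0)$, which is the quantity with a definite sign. This is fixable but must be fixed. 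Second, for $k>n$ the lower bound on the data term saturates at a constant multiple of $\mu$ rather than $\mu R^{\,n-k}$; your weaker bound still yields the contradiction and the stated lifespan estimate (since $k<1/(p-1)$ forces $n<1/(p-1)$ in that regime), but writing $\mu R^{(n-k)_+}$, as in Corollary \ref{Corollary:1.9}, is what gives the sharper rate. With Lemma \ref{Lemma:1.5} supplied and the sign corrected, your Hölder--Young absorption and the optimization $R\sim T$ (including $R\sim1$ for large $\mu$, giving $T_w\le2$) do go through.
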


\noindent
We remark that for $0 < s < n/2$,
there is some $H^s(\mathbb R^n)$ function satisfying \eqref{eq:1.6}.
For details, see \cite[Example 5.1]{bib:9}.
Moreover, he showed Proposition \ref{Proposition:1.3} with mass term.
For details, see \cite[Theorem 1.2]{bib:10}.

The aim of the present paper is to give an alternative relatively direct proof
of  Propositions \ref{Proposition:1.3} and \ref{Proposition:1.4}.
In \cite{bib:7,bib:10},
the non-existence of weak solutions are shown by a test function method
introduced by Baras-Pierre \cite{bib:1} and Zhang \cite{bib:12,bib:13}.
However, standard test function method is not applicable to \eqref{eq:1.1}
because the method relies on pointwise control of derivative of test functions.
Namely, the classical Leibniz rule plays a critical role.
On the other hand, since $(-\Delta)^{1/2}$ is non-local,
$\mathrm{supp} \thinspace (-\Delta)^{1/2} \phi$
is bigger than $\mathrm{supp} \thinspace \phi$
for $\phi \in C_0^{\infty}(\mathbb R^n)$ in general.
Therefore,
it is impossible to have the following pointwise estimate:
There exists a positive constant $C$ such that
for any $\phi \in C_0^{\infty}(\mathbb R^n)$,
	\begin{align}
	|((-\Delta)^{1/2} \phi^\ell)(x)|
	\leq C |\phi^{\ell-1}(x) ((-\Delta)^{1/2} \phi)(x)|,
	\quad \forall x \in \mathbb R^n
	\label{eq:1.9}
	\end{align}
with $\ell > 1$.
In order to avoid from the difficulty of fractional derivative,
in \cite{bib:7,bib:10},
\eqref{eq:1.1} is transformed into
	\begin{align}
	\partial_t^2 v - \Delta v
	= - |\lambda |^2 \partial_t |u|^p,
	\label{eq:1.10}
	\end{align}
where $v = \mathrm{Im}(\overline \lambda u)$.
\eqref{eq:1.10} may be obtained by
applying $- \mathrm{Im} ( \overline \lambda (i \partial t - (-\Delta)^{1/2}))$
to \eqref{eq:1.1}.
In the present paper,
on the other hand,
we are interested in showing global non-existence without using \eqref{eq:1.10}
but by introducing the following estimate:

\begin{Lemma}
\label{Lemma:1.5}
Let $\langle x \rangle = ( 1 + |x|^2)^{1/2}$.
For $q > 0$,
there exists a positive constant $A_{n,q}$ depending only on $n$ and $q$
such that for any $x \in \mathbb R^n$,
	\begin{align}
	|( (-\Delta)^{1/2} \langle \cdot \rangle^{-q} ) (x) |
	\leq
	\begin{cases}
	A_{n,q} \langle x \rangle^{-q-1},
	&\quad \mathrm{if} \quad 0 < q < n,\\
	A_{n,q}\langle x \rangle^{-n-1} (1+\log (1+|x|)),
	&\quad \mathrm{if} \quad q = n,\\
	A_{n,q} \langle x \rangle^{-n-1},
	&\quad \mathrm{if} \quad q > n.
	\end{cases}
	\label{eq:1.11}
	\end{align}
\end{Lemma}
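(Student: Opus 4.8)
The plan is to exploit the singular integral representation of the half-Laplacian rather than any Leibniz-type identity. Since $(-\Delta)^{1/2}$ is the Fourier multiplier with symbol $|\xi|$, for sufficiently smooth and decaying $f$ one has the pointwise formula
\[
((-\Delta)^{1/2} f)(x)
= c_n \, \mathrm{p.v.} \int_{\mathbb R^n} \frac{f(x) - f(y)}{|x-y|^{n+1}} \, dy
= \frac{c_n}{2} \int_{\mathbb R^n} \frac{2 f(x) - f(x+z) - f(x-z)}{|z|^{n+1}} \, dz,
\]
where $c_n > 0$ depends only on $n$, and the second, symmetrized form removes the principal value because its numerator is $O(|z|^2)$ near the origin. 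I would first record that $f = \langle \cdot \rangle^{-q}$ is smooth with $|\nabla^2 f(y)| \lesssim \langle y \rangle^{-q-2}$; this both legitimizes the representation (via the standard equivalence between the Fourier-multiplier and singular-integral definitions of the fractional Laplacian) and controls the local part of the integral.

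For $|x| \gtrsim 1$ I would split the $z$-integral at $|z| = |x|/2$. On the near region $|z| \le |x|/2$ the whole segment joining $x \pm z$ stays in $\{ |y| \gtrsim |x| \}$, so a second-order Taylor estimate gives $|2 f(x) - f(x+z) - f(x-z)| \lesssim |z|^2 \langle x \rangle^{-q-2}$; since $\int_{|z| \le |x|/2} |z|^{1-n} \, dz \lesssim |x|$, this region contributes at most $\langle x \rangle^{-q-2} |x| \lesssim \langle x \rangle^{-q-1}$. On the far region $|z| > |x|/2$ the term $2 f(x)$ contributes $f(x) \int_{|z| > |x|/2} |z|^{-n-1} \, dz \lesssim \langle x \rangle^{-q} |x|^{-1} \lesssim \langle x \rangle^{-q-1}$, so both of these pieces are already consistent with the claimed bound.

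The delicate piece is the remaining far-region contribution $\int_{|z| > |x|/2} (f(x+z) + f(x-z)) |z|^{-n-1} \, dz$, which by the symmetry $z \mapsto -z$ equals $2 \int_{|z| > |x|/2} f(x+z) |z|^{-n-1} \, dz$ and, after the substitution $y = x+z$, becomes $2 \int_{|y-x| > |x|/2} \langle y \rangle^{-q} |y-x|^{-n-1} \, dy$. This is where the non-locality produces the trichotomy, and I expect it to be the main obstacle. I would split it according to the size of $|y|$. Where $|y| \le |x|/2$ one has $|y-x| \ge |x|/2$, so the factor $|y-x|^{-n-1} \lesssim |x|^{-n-1}$ pulls out and leaves $|x|^{-n-1} \int_{|y| \le |x|/2} \langle y \rangle^{-q} \, dy$; the truncated integral $\int_{|y| \le R} \langle y \rangle^{-q} \, dy$ is comparable to $R^{n-q}$ for $q < n$, to $\log R$ for $q = n$, and to a finite constant for $q > n$, which produces exactly $|x|^{-q-1}$, $|x|^{-n-1} \log |x|$, and $|x|^{-n-1}$ respectively. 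Where $|y| > |x|/2$ the isotropic rescaling $y = |x| w$ reduces the integral to $|x|^{-q-1}$ times a finite, $x$-independent constant, which in each case is dominated by (or matches) the bound just obtained.

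Finally, for $|x| \lesssim 1$, where $\langle x \rangle^{-q-1} \sim 1$, I would instead split the original $z$-integral at $|z| = 1$: the Taylor estimate on $|z| \le 1$ and the crude bound $|2 f(x) - f(x+z) - f(x-z)| \le 4 \|f\|_{L^\infty}$ on $|z| > 1$ both yield $O(1)$, which is precisely the asserted bound in this range. Collecting the near-region, far-region, and small-$|x|$ contributions and absorbing all implicit constants into a single $A_{n,q}$ gives \eqref{eq:1.11}.
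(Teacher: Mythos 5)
Your proof is correct and follows essentially the same strategy as the paper: the singular-integral representation of $(-\Delta)^{1/2}$, a splitting of the integration domain at $|z|\sim |x|$, a second-order Taylor estimate on the near region, and the trichotomy emerging from $\int_{|y|\le R}\langle y\rangle^{-q}\,dy \sim R^{n-q}$, $\log R$, or $O(1)$ on the far region. The only real difference is cosmetic: you use the symmetrized second-difference kernel to dispose of the principal value at once, whereas the paper keeps the one-sided P.V.\ form and cancels the first-order term explicitly via the oddness identity $\mathrm{P.V.}\int_{|y|<r} y\,|y|^{-n-1}\,dy=0$.
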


Lemma \ref{Lemma:1.5} may be shown by a direct computation
with the following representation:
	\begin{align}
	((-\Delta)^{1/2} f)(x)
	&= B_{n,s} \thinspace \mathrm{P.V.} \int_{\mathbb R^n} \frac{f(x) - f(x+y)}{|y|^{n+1}} dy
	\nonumber\\
	&= B_{n,s} \thinspace \lim_{\epsilon \searrow 0}
	\int_{|y| \geq \epsilon} \frac{f(x) - f(x+y)}{|y|^{n+1}} dy,
	\label{eq:1.12}
	\end{align}
where
	\[
	B_{n,s}
	= \bigg( \int_{\mathbb R^n} \frac{1-\cos(\xi_1)}{|\xi|^{n+1}} d \xi \bigg)^{-1}.
	\]
For details of this representation,
for example, we refer the reader \cite{bib:5}.
If one regards $(-\Delta)^{1/2}$ as $\nabla$,
Lemma \ref{Lemma:1.5} seems natural at least for $0 < q < n$.
When $q \geq n $, the decay rate of fractional derivative is worse
than the expectation form the classical first derivative
but it is sufficient for our aim and actually sharp.
The dominating term of the fractional derivative for $q \geq n$
appears when $|x|/2 \leq |y| \leq 2 |x|$ hold.
Indeed,
	\[
	|x|/2 \leq |y| \leq 2 |x|
	\Longleftarrow
	|x+y| \leq |x|/2
	\]
and therefore
	\[
	\int_{|x|/2 \leq |y| \leq 2|x|} (1+|x+y|)^{-q} |y|^{-n-1} dy
	\geq 2^{-n-1} |x|^{-n-1} \int_{|z| \leq |x|/2} (1+|z|)^{-q} dz.
	\]
For details, see the proof of Lemma \ref{Lemma:1.5}
and Remarks \ref{Remark:2.1} and \ref{Remark:2.2} in Section 2.

We also remark that C\'ordoba and C\'ordoba \cite{bib:3} showed that
	\begin{align}
	(-\Delta)^{s/2} (\phi^2 )(x) \leq 2 \phi(x) ((-\Delta)^{s/2} \phi)(x)
	\label{eq:1.13}
	\end{align}
for any $0 \leq s \leq 2$, $\phi \in \mathcal S(\mathbb R^2)$, and $x \in \mathbb R^2$,
where $\mathcal S$ denotes the collection of rapidly decreasing functions.
In general, $\phi \geq 0$ does not imply $(-\Delta)^{s/2} \phi \geq 0$,
and therefore \eqref{eq:1.13} does not imply \eqref{eq:1.9}.
D'Abbicco and Reissig \cite{bib:4} studied global non-existence for
structural damped wave equation possessing fractional derivative
by generalizing \eqref{eq:1.13}.
For the study of structural damped wave equation,
\eqref{eq:1.13} works well because we have non-negative solutions(\cite[Lemma 1]{bib:4}),
which we cannot expect for \eqref{eq:1.1}.

Lemma \ref{Lemma:1.5} implies the following statement,
which is our main statement:

\begin{Proposition}
\label{Proposition:1.6}
Let
	\[
	X(T)
	= C([0,T);L^2(\mathbb R^n))
	\cap C^1([0,T),H^{-1}(\mathbb R^n))
	\cap L^\infty(0,T;L^{p}(\mathbb R^n)).
	\]
Let $u_0 \in L^2(\mathbb R^n)$ satisfy that
	\begin{align}
	M_R(0)
	> C_{n,p,\alpha} R^{n-1/(p-1)},
	\label{eq:1.14}
	\end{align}
with some $R >0$ and $\alpha \in \mathbb C$ satisfying that
	\begin{align}
	\mathrm{Re} ( \alpha \lambda) > 0,
	\label{eq:1.15}
	\end{align}
where $M_R(0)$ and $C_{n,p,\alpha}$ is given by
	\begin{align*}
	M_R(0)
	&= - \mathrm{Im}
	\bigg( \alpha \int_{\mathbb R^n} u_0(x) \langle x / R \rangle^{-n-1} dx \bigg),\\
	C_{n,p,\alpha}^p
	&=
	2^{1+p'/p} p^{-p'/p} p'^{-1} \mathrm{Re}(\alpha \lambda)^{-p'} | \alpha|^{p+p'}
	A_{n,n+1}^{p'}
	\bigg(\int_{\mathbb R^n} \langle x \rangle^{-n-1} dx \bigg)^p.
	\end{align*}
Then there is no solution for \eqref{eq:1.1} in $X(T)$
with $u(0) = u_0$ and $T > T_{n,p,\lambda,\alpha,R}$, where
	\begin{align*}
	T_{n,p,\lambda,\alpha,R}
	&= (p-1)^{-1} D_{n,p,\lambda,\alpha}^{-1}
	R^{n(p-1)}( M_R(0) - C_{n,p,\alpha} R^{n-1/(p-1)} )^{-p+1},\\
	D_{n,p,\lambda,\alpha}
	&= 2^{-1} \mathrm{Re}(\alpha \lambda) |\alpha|^{-p}
	\bigg( \int_{\mathbb R^n} \langle x \rangle^{-n-1} dx \bigg)^{-p+1}.
	\end{align*}
\end{Proposition}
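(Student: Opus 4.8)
The plan is to run a test function argument directly on the semirelativistic equation itself, using the weak formulation \eqref{eq:1.4} with a carefully chosen test function built from the weight $\langle x/R \rangle^{-n-1}$ whose fractional derivative is controlled by Lemma \ref{Lemma:1.5}. Concretely, I would work with the quantity
	\[
	M_R(t)
	= - \mathrm{Im}
	\bigg( \alpha \int_{\mathbb R^n} u(t,x) \langle x / R \rangle^{-n-1} dx \bigg),
	\]
and derive a differential inequality for it. Pairing \eqref{eq:1.1} against $\alpha \langle x/R \rangle^{-n-1}$ in $L^2$ and taking imaginary parts, the linear term $i\partial_t u$ produces $\partial_t M_R(t)$, while the dispersive term $(-\Delta)^{1/2}u$ becomes (after moving the operator onto the test function, since it is self-adjoint) a term involving $(-\Delta)^{1/2}\langle x/R\rangle^{-n-1}$, and the nonlinearity contributes $\mathrm{Re}(\alpha\lambda)\int |u|^p \langle x/R\rangle^{-n-1}\,dx$. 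The key structural point is that the real-part/imaginary-part bookkeeping enforced by \eqref{eq:1.15} makes the nonlinear term enter with a favorable sign, so that schematically
	\[
	\partial_t M_R(t)
	\geq \mathrm{Re}(\alpha\lambda)\int_{\mathbb R^n} |u(t,x)|^p \langle x/R\rangle^{-n-1}\,dx
	- \mathcal E_R(t),
	\]
where $\mathcal E_R(t)$ is the error coming from the fractional derivative falling on the weight.

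The heart of the argument is to dominate the error term $\mathcal E_R(t)$ by the good nonlinear term via Hölder's inequality. By Lemma \ref{Lemma:1.5} applied with $q = n+1 > n$ and the scaling $x \mapsto x/R$, one has the pointwise bound
	\[
	\big| ((-\Delta)^{1/2} \langle \cdot/R \rangle^{-n-1})(x) \big|
	\leq A_{n,n+1} R^{-1} \langle x/R \rangle^{-n-1}.
	\]
This is exactly why $q=n+1$ is chosen: the fractional derivative of the weight is bounded by $R^{-1}$ times the same weight, so the error integrand is controlled by $|u| \langle x/R\rangle^{-n-1}$ with a gain of $R^{-1}$. I would then split the weight as $\langle x/R\rangle^{-n-1} = \langle x/R\rangle^{-(n+1)/p}\cdot \langle x/R\rangle^{-(n+1)/p'}$ and apply Hölder with exponents $p$ and $p'$, so that
	\[
	|\alpha|\, A_{n,n+1} R^{-1}\!\int |u|\langle x/R\rangle^{-n-1}dx
	\leq
	\bigg(\int |u|^p \langle x/R\rangle^{-n-1}dx\bigg)^{1/p}
	\!\!\cdot |\alpha| A_{n,n+1} R^{-1}
	\bigg(\int \langle x/R\rangle^{-n-1}dx\bigg)^{1/p'}.
	\]
A Young inequality $ab \le \frac1p a^p \epsilon^{-p'/p}\cdots$ then absorbs the $\int|u|^p\langle x/R\rangle^{-n-1}dx$ factor (with exponent $1/p<1$) into a fraction of the good term on the right-hand side, at the cost of a purely geometric constant times a power of $R$. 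Tracking the powers of $R$ through $\int \langle x/R\rangle^{-n-1}dx = R^n \int \langle y\rangle^{-n-1}dy$ is what produces the explicit constants $C_{n,p,\alpha}$ and $D_{n,p,\lambda,\alpha}$, and in particular the exponent $R^{n-1/(p-1)}$ in \eqref{eq:1.14}.

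The outcome of the previous step is a closed differential inequality of Riccati type,
	\[
	\partial_t M_R(t)
	\geq D_{n,p,\lambda,\alpha}\, R^{-n(p-1)}\, M_R(t)^p
	- (\text{absorbed remainder}),
	\]
where the Hölder reduction also lets me bound $\int |u|^p \langle x/R\rangle^{-n-1}dx$ from below by $M_R(t)^p$ up to the normalizing factor $\big(\int\langle x/R\rangle^{-n-1}dx\big)^{-(p-1)}$, again by Hölder applied to the definition of $M_R(t)$. Once the constant shift is carried through, the condition \eqref{eq:1.14} guarantees that the right-hand side is strictly positive at $t=0$, so $M_R$ is increasing and the nonlinear ODE comparison $\dot y \ge c\, y^p$ with $p>1$ forces blow-up of $M_R$ in finite time; integrating $y^{-p}\dot y \ge c$ from $0$ yields precisely the stated bound $T_{n,p,\lambda,\alpha,R}$. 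I expect the main obstacle to be rigorous, rather than formal: justifying the integration by parts that moves $(-\Delta)^{1/2}$ onto the weight and differentiating $M_R(t)$ in time within the regularity class $X(T)$, since $u(t)\in L^2\cap L^p$ need not decay and the weight is only polynomially decaying, so one must verify the integrals and the self-adjointness pairing converge and that $\langle\cdot/R\rangle^{-n-1}$ is an admissible test function (or can be approximated by admissible ones in the sense of Definition, using the $C^1([0,T),H^{-1})$ time regularity to make sense of $\partial_t M_R$). Managing the absorption constants so that the threshold in \eqref{eq:1.14} and the lifespan bound come out with exactly the displayed form is bookkeeping, but getting the functional-analytic justification clean is the delicate part.
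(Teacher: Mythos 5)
Your proposal is correct and follows essentially the same route as the paper: the author also works with $M_R(t)=-\mathrm{Im}\big(\alpha\int u(t,x)\langle x/R\rangle^{-n-1}dx\big)$, applies Lemma \ref{Lemma:1.5} with $q=n+1$ so that the scaled weight reproduces itself up to a factor $A_{n,n+1}R^{-1}$, absorbs the error by the same H\"older--Young splitting, bounds $M_R(t)^p$ by the weighted $L^p$ integral via H\"older, and closes the Riccati inequality $\frac{d}{dt}M_R\geq D_{n,p,\lambda,\alpha}R^{-n(p-1)}(M_R-C_{n,p,\alpha}R^{n-1/(p-1)})^p$, contradicting $M_R(t)\leq\|u(t)\|_{L^2}\|\langle\cdot/R\rangle^{-n-1}\|_{L^2}$ past $T_{n,p,\lambda,\alpha,R}$. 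The only cosmetic difference is that the paper works directly with $X(T)$ solutions (where the $C^1([0,T);H^{-1})$ regularity and $\langle\cdot/R\rangle^{-n-1}\in H^1$ justify the pairing) rather than through the weak formulation \eqref{eq:1.4}.
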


Proposition \ref{Proposition:1.6} follows from Lemma \ref{Lemma:1.5}
by using an ordinary differential equation (ODE) approach
introduced by the author and Ozawa \cite{bib:8}.
Indeed, it is shown that for some $R > 0$,
	\[
	F(t)
	= - \mathrm{Im}
	\bigg( \alpha \int_{\mathbb R^n} u(t,x) \langle x / R \rangle^{-n-1} dx \bigg)
	- C_{n,p,\alpha} R^{n-1/(p-1)}
	\quad
	(F(0) > 0),
	\]
is a super solution of an ODE taking the form of $f' = f^p$,
coming from \eqref{eq:1.1} without $(-\Delta)^{1/2} u$.
Therefore $F$ cannot exists globally
and is shown to blow up at $t = T_{n,p,\lambda,\alpha,R}$.
We remark that $L^2(\mathbb R^n)$ solution may blow up or
loose its sense before the blowup time of $F$.
We also remark that this approach is considered relatively direct
comparing to test function method
because with test function method,
since solutions are canceled out in weak equations,
it is impossible to see the behavior of blowup solutions.
On the other hand, in our approach,
a rough a priori behavior of weighted integral of solutions
is obtained.

Proposition \ref{Proposition:1.6} is our main statement because
in scaling subcritical case,
Propositions \ref{Proposition:1.2}, \ref{Proposition:1.3} and \ref{Proposition:1.4}
may be obtained as corollaries of Proposition \ref{Proposition:1.6}.

\begin{Corollary}
\label{Corollary:1.7}
Let $1 < p < 1 + 1/n$.
Let $\alpha \in \mathbb C$ and $u_0 \in (L^1 \cap L^2)(\mathbb R^n)$
satisfy \eqref{eq:1.15} and
	\begin{align}
	-\mathrm{Im} \bigg( \alpha \int_{\mathbb R^n} u_0 (x) dx \bigg) > 0.
	\label{eq:1.16}
	\end{align}
Then there exists no solution in $X(T)$ for sufficiently large $T$.
\end{Corollary}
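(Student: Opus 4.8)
The plan is to deduce Corollary~\ref{Corollary:1.7} directly from Proposition~\ref{Proposition:1.6} by verifying that its hypothesis \eqref{eq:1.14} holds for a suitably large choice of the scale parameter $R$. The free parameter $R$ is the key: since the data $u_0$ is fixed, I want to show that $M_R(0)$ grows slower than, or exceeds, the threshold $C_{n,p,\alpha} R^{n-1/(p-1)}$ for appropriate $R$, and here the condition $1 < p < 1+1/n$ is precisely what controls the sign of the exponent $n - 1/(p-1)$.

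First I would analyze the two quantities in \eqref{eq:1.14} as functions of $R$. For the left-hand side, write
	\[
	M_R(0)
	= - \mathrm{Im}
	\bigg( \alpha \int_{\mathbb R^n} u_0(x) \langle x / R \rangle^{-n-1} dx \bigg),
	\]
and observe that, since $u_0 \in L^1(\mathbb R^n)$ and $\langle x/R \rangle^{-n-1} \le 1$ converges pointwise to $1$ as $R \to \infty$, the dominated convergence theorem gives
	\[
	M_R(0) \longrightarrow - \mathrm{Im} \bigg( \alpha \int_{\mathbb R^n} u_0(x) dx \bigg) > 0
	\quad \text{as} \quad R \to \infty,
	\]
where positivity of the limit is exactly hypothesis \eqref{eq:1.16}. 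Thus $M_R(0)$ is bounded below by a fixed positive constant for all large $R$. For the right-hand side, the exponent satisfies $n - 1/(p-1) < 0$ precisely when $p-1 < 1/n$, i.e. when $p < 1 + 1/n$, which is our assumption; hence $C_{n,p,\alpha} R^{n-1/(p-1)} \to 0$ as $R \to \infty$.

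Combining these, for all sufficiently large $R$ the left-hand side of \eqref{eq:1.14} stays above a positive constant while the right-hand side tends to zero, so \eqref{eq:1.14} holds. Fixing such an $R$, Proposition~\ref{Proposition:1.6} yields the finite threshold time $T_{n,p,\lambda,\alpha,R}$ beyond which no solution in $X(T)$ exists, which gives the claimed global non-existence. I expect the only delicate point to be confirming that the relevant geometric constants $A_{n,n+1}$ and $\int_{\mathbb R^n} \langle x \rangle^{-n-1} dx$ appearing in $C_{n,p,\alpha}$ are finite and independent of $R$ (they are, being pure dimensional constants from Lemma~\ref{Lemma:1.5}), so that the full $R$-dependence of the threshold term is carried by the explicit power $R^{n-1/(p-1)}$; the rest is the elementary asymptotic comparison just described, and the strict inequality in \eqref{eq:1.16} is what guarantees a genuine gap rather than a limiting equality.
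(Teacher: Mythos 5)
Your proposal is correct and follows essentially the same route as the paper: apply dominated convergence to show $M_R(0)$ tends to the positive limit $-\mathrm{Im}\big(\alpha \int u_0\big)$ as $R \to \infty$, note that $n - 1/(p-1) < 0$ forces the threshold term to vanish, and then invoke Proposition~\ref{Proposition:1.6}. No gaps.
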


\begin{Corollary}
\label{Corollary:1.8}
Let $u_0 (x) = \mu f (x)$ where $\mu \gg 1$ and $f$ satisfies
	\begin{align}
	-\mathrm{Im}(\alpha f (x) ) \geq
	\begin{cases}
	|x|^{-k},
	&\quad \mathrm{if} \quad|x| \leq 1,\\
	0,
	&\quad \mathrm{if} \quad|x| > 1,
	\end{cases}
	\label{eq:1.17}
	\end{align}
with some $k < \min(n/2,1/(p-1))$ and $\alpha$ satisfying \eqref{eq:1.15}.
Then there exists some $R_1 > 0$ satisfying \eqref{eq:1.14}
and
	\[
	T_{n,p,\lambda,\alpha,R_1}
	\leq C \mu ^{-\frac{1}{1/(p-1)-k}}.
	\]
\end{Corollary}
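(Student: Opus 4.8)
The plan is to apply Proposition~\ref{Proposition:1.6} with a carefully chosen, and crucially \emph{small}, radius $R_1$ depending on $\mu$. First I would rewrite $M_R(0)$: since $\langle x/R\rangle^{-n-1}$ is real and $u_0=\mu f$,
\[
M_R(0) = \mu \int_{\mathbb R^n} \big(-\mathrm{Im}(\alpha f(x))\big)\langle x/R\rangle^{-n-1}\,dx
\geq \mu \int_{|x|\le 1} |x|^{-k}\langle x/R\rangle^{-n-1}\,dx,
\]
where I used the lower bound \eqref{eq:1.17} and discarded the nonnegative contribution from $|x|>1$. Then, for $0<R\le 1$, I would rescale $x=Ry$ to isolate the $R$-dependence,
\[
M_R(0) \geq \mu R^{n-k}\int_{|y|\le 1/R}|y|^{-k}\langle y\rangle^{-n-1}\,dy
\geq c_1\,\mu R^{n-k},
\qquad
c_1 := \int_{|y|\le 1}|y|^{-k}\langle y\rangle^{-n-1}\,dy,
\]
the constant $c_1$ being finite and positive because $k<n$ (guaranteed by $k<n/2$). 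The point of this rescaling is that, since $R\le 1$, shrinking $R$ concentrates the weight on the singularity $|x|^{-k}$ and produces the favorable factor $R^{n-k}$ rather than a mere constant.

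Next I would choose $R_1$ so that $M_{R_1}(0)$ dominates the threshold in \eqref{eq:1.14} by a fixed factor. Writing $\beta=1/(p-1)$ and comparing $M_R(0)\gtrsim \mu R^{n-k}$ with $C_{n,p,\alpha}R^{n-\beta}$, the ratio behaves like $\mu R^{\beta-k}$; since $\beta-k>0$ (guaranteed by $k<1/(p-1)$), this grows with $\mu$. Taking
\[
R_1 = \Big(\tfrac{2C_{n,p,\alpha}}{c_1}\Big)^{1/(\beta-k)}\mu^{-1/(\beta-k)}
\]
yields $M_{R_1}(0)\geq 2C_{n,p,\alpha}R_1^{n-\beta}$, so that \eqref{eq:1.14} holds strictly and moreover $M_{R_1}(0)-C_{n,p,\alpha}R_1^{n-\beta}\geq C_{n,p,\alpha}R_1^{n-\beta}$. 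Because $\mu\gg 1$ we indeed have $R_1\le 1$, so the rescaling step above applies.

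Finally I would substitute into the lifespan bound of Proposition~\ref{Proposition:1.6}. The exponents collapse thanks to the identity $n(p-1)-(n-\beta)(p-1)=\beta(p-1)=1$:
\[
T_{n,p,\lambda,\alpha,R_1}
\leq (p-1)^{-1}D_{n,p,\lambda,\alpha}^{-1}\,R_1^{n(p-1)}\big(C_{n,p,\alpha}R_1^{n-\beta}\big)^{-(p-1)}
= C' R_1 = C''\mu^{-1/(\beta-k)},
\]
which is exactly the claimed estimate, with $\beta=1/(p-1)$ and $C''$ depending only on $n,p,\lambda,\alpha,k$.

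The main obstacle is conceptual rather than computational: one must recognize that $R_1$ should be taken \emph{small}, of order $\mu^{-1/(1/(p-1)-k)}$, rather than large. A fixed or large radius, combined only with the uniform bound $M_R(0)\gtrsim\mu$, yields merely the weaker estimate $T\lesssim\mu^{-(p-1)}$; the gain $R^{n-k}$ from concentrating the test weight on the local singularity is what sharpens the exponent from $-(p-1)$ to $-1/(1/(p-1)-k)$. Everything else — the rescaling $x=Ry$ and the collapse of exponents via $\beta(p-1)=1$ — is routine once this choice is made.
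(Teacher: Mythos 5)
Your proposal is correct and follows essentially the same route as the paper: bound $M_R(0)\gtrsim \mu R^{n-k}$ by concentrating the weight $\langle x/R\rangle^{-n-1}$ on the singularity (the paper restricts to $|x|\le R$ where the weight is bounded below, while you rescale $x=Ry$ — the same estimate), then choose $R_1\sim \mu^{-1/(1/(p-1)-k)}$ so that $M_{R_1}(0)$ exceeds the threshold by a fixed factor, and plug into the lifespan formula using $\big(n-(n-1/(p-1))\big)(p-1)=1$. No gaps.
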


\begin{Corollary}
\label{Corollary:1.9}
Let $u_0 (x) = \mu f (x)$ where $0 < \mu \ll 1$ and $f$ satisfies
	\begin{align}
	-\mathrm{Im}(\alpha f(x) ) \geq
	\begin{cases}
	0,
	&\quad \mathrm{if} \quad|x| \leq 1,\\
	|x|^{-k},
	&\quad \mathrm{if} \quad|x| > 1,
	\end{cases}
	\label{eq:1.18}
	\end{align}
with some $n/2 < k < 1/(p-1)$ and $\alpha$ satisfying \eqref{eq:1.15}.
Then there exists some $R_2 > 0$ satisfying \eqref{eq:1.14}
and
	\begin{align}
	T_{n,p,\lambda,\alpha,R_2}
	\leq C \mu ^{-\frac{1}{1/(p-1)-\min(n,k)}}.
	\label{eq:1.19}
	\end{align}
\end{Corollary}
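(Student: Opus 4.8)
The plan is to deduce Corollary~\ref{Corollary:1.9} from Proposition~\ref{Proposition:1.6} by choosing the radius $R_2$ so that the smallness of $\mu$ is compensated by a large weight support; the only genuine computation is a sharp lower bound for the weighted moment $M_R(0)$. Since $\langle x/R\rangle^{-n-1}$ is real and positive and $u_0 = \mu f$, I would first rewrite $M_R(0) = \mu \int_{\mathbb R^n} (-\mathrm{Im}(\alpha f(x))) \langle x/R\rangle^{-n-1}\,dx$; the integrand is nonnegative by \eqref{eq:1.18}, and the integral is finite because $f \in L^2(\mathbb R^n)$ and $\langle \cdot \rangle^{-n-1} \in L^2(\mathbb R^n)$. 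Discarding the (nonnegative) contribution of $|x| \le 1$ and using the pointwise lower bound in \eqref{eq:1.18} gives
\[
M_R(0) \ge \mu \int_{|x| > 1} |x|^{-k} \langle x/R\rangle^{-n-1}\,dx .
\]

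To bound the last integral from below, I would restrict to the annulus $1 < |x| < R$ (assuming $R \ge 2$), where $\langle x/R\rangle^{-n-1} \ge 2^{-(n+1)/2}$, reducing matters to the radial integral $\int_1^R r^{n-1-k}\,dr$ up to a dimensional constant. Splitting into the cases $k < n$, $k = n$, and $k > n$ produces the growth rates $R^{n-k}$, $\log R$, and a positive constant, respectively; in every case this yields a constant $c_1 = c_1(n,k) > 0$ with
\[
M_R(0) \ge c_1 \mu R^{n - \min(n,k)}, \qquad R \ge 2 .
\]
The crucial structural observation is that $\min(n,k) \le k < 1/(p-1)$, so the exponent $n - \min(n,k)$ strictly exceeds the exponent $n - 1/(p-1)$ appearing on the right-hand side of \eqref{eq:1.14}. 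Hence the ratio $M_R(0)/\big(C_{n,p,\alpha} R^{n-1/(p-1)}\big)$ grows like $\mu R^{1/(p-1)-\min(n,k)}$ and tends to infinity as $R \to \infty$, so \eqref{eq:1.14} is satisfiable even though $\mu \ll 1$.

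I would then fix $R_2$ by requiring $C_{n,p,\alpha} R_2^{n-1/(p-1)} \le \tfrac12 c_1 \mu R_2^{n-\min(n,k)}$, that is,
\[
R_2 = \Big( \tfrac{2 C_{n,p,\alpha}}{c_1 \mu} \Big)^{1/(1/(p-1) - \min(n,k))} = C' \mu^{-1/(1/(p-1)-\min(n,k))} .
\]
Since $1/(p-1) - \min(n,k) > 0$ and $\mu \ll 1$, this forces $R_2 \ge 2$, so the lower bound above applies and \eqref{eq:1.14} holds; moreover, by construction,
\[
M_{R_2}(0) - C_{n,p,\alpha} R_2^{n-1/(p-1)} \ge \tfrac12 M_{R_2}(0) \ge \tfrac{c_1}{2}\, \mu R_2^{n-\min(n,k)} .
\]

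Finally, substituting this into the lifespan formula of Proposition~\ref{Proposition:1.6} gives
\[
T_{n,p,\lambda,\alpha,R_2} \le C'' \mu^{-(p-1)} R_2^{n(p-1)} R_2^{-(n-\min(n,k))(p-1)} = C'' \mu^{-(p-1)} R_2^{\min(n,k)(p-1)} ,
\]
and inserting the value of $R_2$ collapses the exponent of $\mu$ to $-(p-1)\tfrac{1/(p-1)}{1/(p-1)-\min(n,k)} = -\tfrac{1}{1/(p-1)-\min(n,k)}$, which is exactly \eqref{eq:1.19}. The main obstacle I anticipate is the lower bound on $M_R(0)$: the three-case radial analysis and, in particular, verifying that the constant $c_1$ can be taken uniform for all $R \ge 2$ so that the threshold radius $R_2$ is well defined; once this is in hand, the remainder is bookkeeping with the explicit constants $C_{n,p,\alpha}$ and $D_{n,p,\lambda,\alpha}$.
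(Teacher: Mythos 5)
Your proposal is correct and follows essentially the same route as the paper: lower-bound $M_R(0)$ by restricting to the annulus $1<|x|<R$, obtain $M_R(0)\gtrsim \mu R^{(n-k)_+}$ by the same case analysis, choose $R_2\sim \mu^{-1/(1/(p-1)-\min(n,k))}$ to absorb the $C_{n,p,\alpha}R^{n-1/(p-1)}$ term, and substitute into the lifespan formula. The only caveat is your uniformity threshold ``$R\ge 2$'': for $k<n$ one needs $R^{n-k}\ge 2$, i.e.\ $R\ge 2^{1/(n-k)}$, but this is harmless since $\mu\ll 1$ forces $R_2\gg 1$, exactly as in the paper's ``for $R\gg 1$'' setup.
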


Since $\alpha$ can be $\overline \lambda$,
\eqref{eq:1.16}, \eqref{eq:1.17} and \eqref{eq:1.18} with \eqref{eq:1.15}
are more general conditions than \eqref{eq:1.5}, \eqref{eq:1.6}, and \eqref{eq:1.7}.
Moreover, when $k > n$, \eqref{eq:1.19} is sharp
from the view point of the scaling transformation \eqref{eq:1.3}
as long as one tries to estimate the lifespan
with $L^1(\mathbb R^n)$ norm of initial data.
Indeed, the lifespan depends on $\rho^{-1}$
and
	\[
	\| u_{0,\rho} \|_{L^1(\mathbb R^n)}^{-\frac{1}{\frac{1}{p-1}-n}}
	= \| u_{0} \|_{L^1(\mathbb R^n)}^{-\frac{1}{\frac{1}{p-1}-n}} \rho^{-1}.
	\]

In the sections bellow,
we give the proof of each statements.

\section{Proof of Lemma \ref{Lemma:1.5}}

We estimate $(-\Delta)^{1/2} \langle \cdot \rangle^{-q}$ with \eqref{eq:1.12}
pointwisely without $B_{n,q}$.

At first,
we recall
	\begin{align}
	|x+y|^2 -|x|^2
	= 2 x \cdot y + |y|^2 = y \cdot (2 x + y).
	\label{eq:2.1}
	\end{align}
and the fact that for any $ r > 0$,
	\begin{align}
	\mathrm{P.V.} \int_{|y| < r} y |y|^{-n-1} dy = 0.
	\label{eq:2.2}
	\end{align}


For $|x| \leq 1$,
we divide integral domain into the following two parts:
	\begin{align*}
	\Omega_1 &= \{(x,y) : \ |y| \geq 1 \},\\
	\Omega_2 &= \{(x,y) : \ |y| \leq 1 \}.
	\end{align*}
For $\Omega_1$, it is easy to see that
	\[
	\sup_{|x| \leq 1} \bigg| \int_{\Omega_1}
	\frac{\langle x+y \rangle^{-q} - \langle x \rangle^{-q}}{|y|^{n+1}} dy \bigg|
	< \infty.
	\]
For $\Omega_2$, we rewrite $\langle x+y \rangle^{-q}$,
by applying the Taylor theorem for $(1+\cdot)^{-q/2}$, as
	\begin{align}
	\langle x+y \rangle^{-q}
	&= \langle x \rangle^{-q}
	- \frac{q}{2} \langle x \rangle^{-q-2} (|x+y|^2 - |x|^2)
	+ \frac{q(q+2)}{2^2} R(x,y),
	\label{eq:2.3}\\
	R(x,y)
	&= \int_{|x|^2}^{|x+y|^2}
	(1+\rho)^{-q/2-2} (|x+y|^2 - \rho) d \rho.
	\nonumber
	\end{align}
Then the principle value for first approximation of $\langle x+y \rangle^{-q}$
may be computed with \eqref{eq:2.1} and \eqref{eq:2.2} by
	\begin{align*}
	&\mathrm{P.V.}
	\int_{\Omega_2} \langle x \rangle^{-q-2} (|x+y|^2 - |x|^2) |y|^{-n-1} dy\\
	&= 2 \langle x \rangle^{-q-2} x \cdot
	\mathrm{P.V.} \int_{|y| < 1} y |y|^{-n-1} dy
	+ \langle x \rangle^{-q-2} \int_{|y| < 1} |y|^{-n+1} dy\\
	&= \omega_n \langle x \rangle^{-q-2}
	\leq \omega_n,
	\end{align*}
where $\omega_n$ is the volume of the unit sphere $S^{n-1}$.
Moreover,
since
	\begin{align*}
	| R(x,y) |
	&\leq (1+\min(|x|^2,|x+y|^2))^{-q/2-2} ||x+y|^2 - |x|^2|^2\\
	&\leq |y|^2 |y+2x|^2
	\leq 9 |y|^2,
	\end{align*}
the principle value for the remainder is estimated by
	\[
	\bigg| \mathrm{P.V.} \int_{\Omega_2} R(x,y) |y|^{-n-1} dy \bigg|
	\leq 9 \int_{|y| \leq 1} |y|^{-n+1} dy
	\leq 9 \omega_n.
	\]

For $|x| \geq 1$,
we divide integral domain into the following three parts:
	\begin{align*}
	\Omega_3 &= \{(x,y) : \ |y| \geq 2|x| \},\\
	\Omega_4 &= \{(x,y) : \ \frac{1}{2} |x| \leq |y| \leq 2|x| \},\\
	\Omega_5 &= \{(x,y) : \ |y| \leq \frac{1}{2} |x| \}.
	\end{align*}
For $\Omega_3$, since $|x+y| \geq |x|$,
	\[
	\bigg| \int_{\Omega_3}
	\frac{\langle x+y \rangle^{-q} - \langle x \rangle^{-q}}{|y|^{n+1}} dy
	\bigg|
	\leq 2 \omega_n \langle x\rangle^{-q} \int_{2|x|}^\infty r^{-2} dr
	= \omega_n \langle x\rangle^{-q} |x|^{-1}.
	\]
For $\Omega_4$, since $|y| \sim |x|$,
	\begin{align*}
	\int_{\Omega_4} \langle x \rangle^{-q} | y |^{-n-1} dy
	&\leq 2^{n+1} \langle x \rangle^{-q} |x|^{-n-1} \int_{|y| \leq 2|x|} dy\\
	&\leq 2^{2n+1} n^{-1} \omega_n \langle x \rangle^{-q} |x|^{-1}
	\end{align*}
and
	\begin{align*}
	&\int_{\Omega_4} \langle x + y \rangle^{-q} | y |^{-n-1} dy\\
	&\leq
	2^{q/2+n+1} | x |^{-n-1}
	\int_{|x+y| \leq 3|x|} (1+|x+y|)^{-q} dy
	\\
	&\leq 2^{q/2+n+1} \omega_n
	\begin{cases}
	(n-q)^{-1} | x |^{-n-1} (1+3|x|)^{n-q},
	&\quad \mathrm{if} \quad 0 \leq q < n,\\
	| x |^{-n-1} \log (1+3|x|),
	&\quad \mathrm{if} \quad q = n,\\
	(q-n)^{-1} | x |^{-n-1},
	&\quad \mathrm{if} \quad q > n,\\
	\end{cases}
	\end{align*}
where we have used the fact that,
	\[
	(a+b)^{1/2} \geq 2^{-1/2} (a^{1/2} + b^{1/2}).
	\]
For $\Omega_5$, we again use the expansion \eqref{eq:2.3}.
The principle value for first approximation is computed by
	\begin{align*}
	&\mathrm{P.V.}
	\int_{\Omega_5} \langle x \rangle^{-q-2} (|x+y|^2 - |x|^2) |y|^{-n-1} dy\\
	&= \langle x \rangle^{-q-2} \int_{|y| < |x|/2} |y|^{-n+1} dy
	= 2^{-1} \omega_n \langle x \rangle^{-q-2} |x|.
	\end{align*}
Moreover, the remainder is estimated by
	\begin{align*}
	| R(x,y) |
	&\leq (1+\min(|x|^2,|x+y|^2))^{-q/2-2} ||x+y|^2 - |x|^2|^2\\
	&\leq (1+|x|^2/4)^{-q/2-2} |y|^2 |y+2x|^2\\
	&\leq 2^{q+2} \cdot 5^2 \langle x \rangle^{-q-4} |x|^2 |y|^2.
	\end{align*}
Therefore the principle value for the remainder is estimated by
	\begin{align*}
	&\bigg| \mathrm{P.V.} \int_{\Omega_5} R(x,y) |y|^{-n-1} dy \bigg|\\
	&\leq 2^{q+2} \cdot 5^2 \langle x \rangle^{-q-4} |x|^2
	\int_{|y| \leq |x|/2} |y|^{-n+1} dy\\
	&= 2^{q+1} \omega_n \cdot 5^2 \langle x \rangle^{-q-4} |x|^3.
	\end{align*}

\begin{Remark}
\label{Remark:2.1}
When $q \geq n$,
there exists some positive constant $E_q$ and $R$ such that
for any $|x| \geq R$,
	\[
	((- \Delta)^{1/2} \langle \cdot \rangle) (x)
	\leq
	\begin{cases}
	- E_n \langle x \rangle^{-n-1} \log (1+|x|/2),
	&\quad \mathrm{if} \quad q = n.\\
	- E_q \langle x \rangle^{-n-1},
	&\quad \mathrm{if} \quad q > n,
	\end{cases}
	\]
Indeed, in the previous proof, on $\Omega_4^c$,
the size of principle value is estimated by $\langle x \rangle^{-q-1}$
with some constants.
On the other hand, on $\Omega_4$,
\begin{align*}
	&\int_{\Omega_4} (\langle x \rangle^{-q} - \langle x + y \rangle^{-q})
	| y |^{-n-1} dy\\
	&\leq
	 2^{n+1} \langle x \rangle^{-q} | x |^{-n-1}
	\int_{|x+y| \leq 3|x|} dy
	-2^{n+1} | x |^{-n-1}
	\int_{|x+y| \leq |x|/2} (1+|x+y|)^{-q} dy
	\\
	&\leq
	n^{-1} 2^{n+1} 3^n \omega_n | x |^{-1}\langle x \rangle^{-q}
	- 2^{2n} \omega_n | x |^{-n-1} \int_1^{|x|/2} (1+r)^{-q+n-1} dr
\end{align*}
since
	\[
	\{ y ; |x+y| \leq |x|/2 \}
	\subset \Omega_4
	\subset \{ y ; |x+y| \leq 3 |x| \}.
	\]
\end{Remark}

\begin{Remark}
\label{Remark:2.2}
A similar phenomena may happen with some $L^1(\mathbb R^n)$ functions
decaying quicker than $\langle x \rangle^{-n}$.
Indeed, by a similar computation show that
there exists some positive constant $C$ such that for $|x| \gg 1$,
	\[
	((-\Delta)^{1/2} e^{-|\cdot |^2})(x)
	\leq - C \langle x \rangle^{-n-1}.
	\]
\end{Remark}

\section{Proof of Proposition \ref{Proposition:1.6}}
Assume that there exists a solution $u$ for \eqref{eq:1.1}
belonging to $X(T)$ with $ T > T_{n,p,\lambda,\alpha,R}$.
Let
	\[
	M_R(t) = - \mathrm{Im} \bigg( \alpha
	\int_{\mathbb R^n} u_0(x) \langle x/R \rangle^{-n-1} dx \bigg).
	\]
Then
	\begin{align}
	\frac{d}{dt} M_R(t)
	&= \mathrm{Re} \bigg( \alpha
	\int_{\mathbb R^n} i \partial_t u(t,x) \langle x/R \rangle^{-n-1}dx \bigg)
	\nonumber\\
	&= \mathrm{Re} (\alpha \lambda)
	\int_{\mathbb R^n} |u(t,x)|^p \langle x/R \rangle^{-n-1}dx
	\nonumber\\
	&- R^{-1} \mathrm{Re} \bigg( \alpha
	\int_{\mathbb R^n} u(t,x) ((-\Delta)^{1/2} \langle \cdot \rangle^{-n-1})(x/R)dx \bigg)
	\nonumber\\
	&\geq \mathrm{Re} (\alpha \lambda)
	\int_{\mathbb R^n} |u(t,x)|^p \langle x/R \rangle^{-n-1} dx
	\nonumber\\
	&- A_{n,n+1} R^{-1} |\alpha|
	\int_{\mathbb R^n} |u(t,x)| \langle x/R \rangle^{-n-1} dx.
	\label{eq:3.1}
	\end{align}
By the H\"older and Young inequalities,
	\begin{align}
	&A_{n,n+1} |\alpha| R^{-1} \int_{\mathbb R^n} |u(t,x)| \langle x/R \rangle^{-n-1} dx
	\nonumber\\
	&\leq A_{n,n+1} |\alpha| R^{-1}
	\bigg( \int_{\mathbb R^n} \langle x/R \rangle^{-n-1} dx\bigg)^{1/p'}
	\bigg( \int_{\mathbb R^n} |u(t,x)|^p \langle x/R \rangle^{-n-1} dx\bigg)^{1/p}
	\nonumber\\
	&\leq A_{n,n+1} |\alpha| R^{n/p'-1}
	\bigg( \int_{\mathbb R^n} \langle x \rangle^{-n-1} dx\bigg)^{1/p'}
	\bigg( \int_{\mathbb R^n} |u(t,x)|^p \langle x/R \rangle^{-n-1} dx\bigg)^{1/p}
	\nonumber\\
	&\leq p^{-p'/p} p'^{-1} 2^{p'/p} \mathrm{Re}(\alpha \lambda)^{-p'/p} | \alpha|^{p'}
	A_{n,n+1}^{p'}
	R^{n-p'} \int_{\mathbb R^n} \langle x \rangle^{-n-1} dx
	\nonumber\\
	&+ 2^{-1} \mathrm{Re}(\alpha \lambda)
	\int_{\mathbb R^n} |u(t,x)|^p \langle x/R \rangle^{-n-1} dx
	\label{eq:3.2}
	\end{align}
and
	\begin{align}
	|M_R(t)|
	&= \bigg| \mathrm{Im}
	\bigg( \alpha \int_{\mathbb R^n} u(t,x) \langle x/R \rangle^{-n-1} dx \bigg) \bigg|
	\nonumber\\
	&\leq
	|\alpha| R^{n/p'}
	\bigg( \int_{\mathbb R^n} \langle x \rangle^{-n-1} dx \bigg)^{1/p'}
	\bigg( \int_{\mathbb R^n} |u(t,x)|^p \langle x/R \rangle^{-n-1} dx \bigg)^{1/p}.
	\label{eq:3.3}
	\end{align}
Combining \eqref{eq:3.1}, \eqref{eq:3.2}, and \eqref{eq:3.3},
	\begin{align*}
	&\frac{d}{dt} M_R(t)\\
	&\geq 2^{-1} \mathrm{Re}(\alpha \lambda)
	\int_{\mathbb R^n} |u(t,x)|^p \langle x/R \rangle^{-n-1} dx\\
	&- p^{-p'/p} p'^{-1} 2^{p'/p} \mathrm{Re}(\alpha \lambda)^{-p'/p} | \alpha|^{p'}
	A_{n,n+1}^{p'}
	R^{n-p'} \int_{\mathbb R^n} \langle x \rangle^{-n-1} dx\\
	&\geq 2^{-1} \mathrm{Re}(\alpha \lambda) |\alpha|^{-p}
	\bigg( \int_{\mathbb R^n} \langle x \rangle^{-n-1} dx \bigg)^{-p+1}
	R^{-n(p-1)} M_R(t)^p\\
	&- p^{-p'/p} p'^{-1} 2^{p'/p} \mathrm{Re}(\alpha \lambda)^{-p'+1} | \alpha|^{p'}
	A_{n,n+1}^{p'}
	R^{n-p'} \int_{\mathbb R^n} \langle x \rangle^{-n-1} dx\\
	&\geq 2^{-1} \mathrm{Re}(\alpha \lambda) |\alpha|^{-p} R^{-n(p-1)}
	\bigg( \int_{\mathbb R^n} \langle x \rangle^{-n-1} dx \bigg)^{-p+1}
	( M_R(t)^p - C_{n,p,\alpha}^p R^{np-p'} )\\
	&\geq D_{n,p,\lambda,\alpha} R^{-n(p-1)}
	(M_R(t) - C_{n,p,\alpha} R^{n-1/(p-1)})^p.
	\end{align*}
Therefore,
	\begin{align}
	&M_R(t) - C_{n,p,\alpha} R^{n-1/(p-1)}
	\nonumber\\
	&\geq \{( M_R(0) - C_{n,p,\alpha} R^{n-1/(p-1)} )^{-p+1}
	- (p-1) D_{n,p,\lambda,\alpha} R^{-n(p-1)} t \}^{-1/(p-1)}
	\label{eq:3.4}\\
	&> 0,
	\nonumber
	\end{align}
and the RHS of \eqref{eq:3.4} blows up at $ t = T_{n,p,\lambda,\alpha,R}$
and so does $M_R$.
Since
	\[
	M_R(t)
	\leq
	\| u(t) \|_{L^2(\mathbb R^n)}
	\| \langle \cdot / R \rangle^{-n-1} \|_{L^2(\mathbb R^n)},
	\]
\eqref{eq:3.4} contradicts the existence of solutions in $C(0,T;L^2(\mathbb R^n))$
with $ T > T_{n,p,\lambda,\alpha,R}$.

\section{Proof of Corollaries}
\label{section:4}
\begin{proof}[Proof of Corollary \ref{Corollary:1.7}]
By the Lebesgue convergence theorem,
	\[
	\lim_{R \to \infty} M_R(0)
	= - \mathrm{Im} \bigg( \alpha \int_{\mathbb R^n} u_0(x) dx \bigg)
	> 0
	\]
and $R^{n-1/(p-1)} \to 0$ as $ R \to \infty$.
Therefore, \eqref{eq:1.14} is satisfied with some $R > 0$.
\end{proof}

\begin{proof}[Proof of Corollary \ref{Corollary:1.8}]
For $0 < R < 1$,
	\begin{align*}
	M_R(0)
	&\geq \mu \int_{|x| \leq 1} |x|^{-k} \langle x/R \rangle^{-n-1} dx\\
	&\geq 2^{-n-1} \mu \int_{|x| \leq R} |x|^{-k} dx\\
	&= (n-k)^{-1} 2^{-n-1} \omega_n \mu R^{n-k}.
	\end{align*}
Let $I_1 = (n-k)^{-1} 2^{-n-1} \omega_n$ and
	\[
	R_1 = (\mu I_1/(2C_{n,p,\alpha}))^{-\frac{1}{1/(p-1)-k}},
	\]
where $R_1 <1$ if $\mu \gg 1$.
Then
	\begin{align*}
	M_{R_1}(0) - C_{n,p,\alpha} R_1^{n-1/(p-1)}
	&\geq R_1^{n-k} ( \mu I_1 - C_{n,p,\alpha} R_1^{k-1/(p-1)})\\
	&\geq R_1^{n-k} \mu I_1/2 > 0
	\end{align*}
and therefore \eqref{eq:1.14} is satisfied.
Moreover,
	\begin{align*}
	T_{n,p,\lambda,\alpha,R_1}
	&\leq (p-1)^{-1} D_{n,p,\lambda,\alpha}^{-1}
	(\mu I_1/(2C_{n,p,\alpha}))^{\frac{k(p-1)}{k-1/(p-1)}} (\mu I_1)^{-p+1}\\
	&= (p-1)^{-1} D_{n,p,\lambda,\alpha}^{-1}
	(2C_{n,p,\alpha})^{-\frac{k(p-1)}{k-1/(p-1)}}
	(\mu I_1)^{-\frac{1}{1/(p-1)-k}}.
	\end{align*}
\end{proof}

\begin{proof}[Proof of Corollary \ref{Corollary:1.9}]
For $R \gg 1$,
	\begin{align*}
	M_R(0)
	&\geq \mu \int_{|x| \geq 1} |x|^{-k} \langle x/R \rangle^{-n-1} dx\\
	&\geq 2^{-n-1} \mu \int_{1 \leq |x| \leq R} |x|^{-k} dx\\
	&\geq 2^{-n-1} \omega_n \mu \int_1^R r^{n-k-1} dr,\\
	&\geq 2^{-n-1} \omega_n \mu
	\begin{cases}
	(n-k)^{-1} (R^{n-k} - 1),
	&\quad \mathrm{if} \quad k < n,\\
	\int_1^{2} r^{n-k-1} dr,
	&\quad \mathrm{if} \quad k \geq n,
	\end{cases}\\
	&\geq I_2 \mu R^{(n-k)_+},
	\end{align*}
where $(n-k)_+ = \max(n-k,0)$ and
	\[
	I_2
	= \begin{cases}
	2^{-n-2} \omega_n (n-k)^{-1},
	&\quad \mathrm{if} \quad k < n,\\
	2^{-n-1} \omega_n \int_1^{2} r^{n-k-1} dr,
	&\quad \mathrm{if} \quad k \geq n.
	\end{cases}
	\]
Let
	\[
	R_2 = (\mu I_2/(2C_{n,p,\alpha}))^{-\frac{1}{1/(p-1)-\min(n,k)}},
	\]
where $R_2 \gg 1$ if $\mu \ll 1$.
Then
	\begin{align*}
	&T_{n,p,\lambda,\alpha,R_2}\\
	&\leq (p-1)^{-1} D_{n,p,\lambda,\alpha}^{-1}
	R_2^{n(p-1)-(n-k)_+(p-1)}
	( \mu I_2 - C_{n,p,\alpha} R_2^{\min(n,k) - 1/(p-1)})^{-p+1}\\
	&= (p-1)^{-1} D_{n,p,\lambda,\alpha}^{-1}
	R_2^{\min(n,k)(p-1)}
	( \mu I_2 - C_{n,p,\alpha} R_2^{\min(n,k) - 1/(p-1)})^{-p+1}\\
	&\leq (p-1)^{-1} D_{n,p,\lambda,\alpha}^{-1}
	(2C_{n,p,\alpha})^{-\frac{\min(n,k)(p-1)}{\min(n,k)-1/(p-1)}}
	(\mu I_2)^{-\frac{1}{1/(p-1)-\min(n,k)}}.
	\end{align*}
\end{proof}

\section*{Acknowledgment}
The author is grateful to professor Vladimir Georgiev
for his helpful comment on the optimality of \eqref{eq:1.11}
for $q =n$.

\providecommand{\bysame}{\leavevmode\hbox to3em{\hrulefill}\thinspace}
\providecommand{\MR}{\relax\ifhmode\unskip\space\fi MR }
\providecommand{\MRhref}[2]{%
  \href{http://www.ams.org/mathscinet-getitem?mr=#1}{#2}
}
\providecommand{\href}[2]{#2}

\end{document}